\def\beqnn{\begin{eqnarray*}}\def\eeqnn{\end{eqnarray*}}
\newtheorem{theorem}{Theorem}[section]
\newtheorem{lemma}[theorem]{Lemma}
\newtheorem{proposition}[theorem]{Proposition}
\newtheorem{corollary}[theorem]{Corollary}
\theoremstyle{remark}
\newtheorem{remark}[theorem]{Remark}
\theoremstyle{definition}
\theoremstyle{question}
\newtheorem{question}[theorem]{Question}
\theoremstyle{problem}
\theoremstyle{conjecture}
\numberwithin{equation}{section}
\begin{document}

\begin{center}
\title{ {On the operators of Hardy-Littlewood-P\'olya type}}
\end{center}

%    Information for first author
\author{Jianjun Jin}
%    Address of record for the research reported here
\address{School of Mathematics Sciences, Hefei University of Technology, Xuancheng Campus, Xuancheng 242000, P.R.China}
%    Current address
\email{jin@hfut.edu.cn, jinjjhb@163.com}
%    \thanks will become a 1st page footnote.
%  \thanks{The author was supported by National Natural Science Foundation of China (Grant Nos. 11501157).}
%    Information for second author
%\author{Author Two}
%\address{Mathematical Research Section, School of Mathematical Sciences,
%Australian National University, Canberra ACT 2601, Australia}
%\email{two@maths.univ.edu.au}
%\thanks{{\bf The data will be made available on request.}}

%    General info
\subjclass[2010]{47B37; 26D15; 47A30}

%\date{February 9, 2013.}

%\dedicatory{This paper is dedicated to our advisors.}

\keywords{Hardy-Littlewood-P\'olya-type operators; boundedness of operator; compactness of operator; norm of operator}

\begin{abstract}
In this paper we introduce and study several new Hardy-Littlewood-P\'olya-type operators.  In particular, we study a Hardy-Littlewood-P\'olya-type operator induced by a positive Borel measure on $[0,1)$. We establish some sufficient and necessary conditions for the boundedness (compactness) of these operators. We also determine the exact values of the norms of the Hardy-Littlewood-P\'olya-type operators for certain special cases.
\end{abstract}

\maketitle
\pagestyle{plain}

\section{\bf {Introduction and main results}}
Throughout this paper, for two positive numbers $A, B$, we write $A \preceq B$, or $A \succeq B$, if there exists a positive constant $C$  independent of the arguments such that $A \leq C B$, or $A \geq C B$, respectively. We will write $A \asymp B$ if both $A \preceq B$ and $A \succeq B$.

Let $p>1$. We denote the conjugate of $p$ by $p'$, i.e., $\frac{1}{p}+\frac{1}{p'}=1$.  Let $l^p$ be the space of sequences of complex numbers, i.e.,
\begin{equation*}l^{p}:=\{a=\{a_n\}_{n=1}^{\infty}: \|a\|_{p}=(\sum_{n=1}^{\infty} |a_{n}|^p)^{\frac{1}{p}}<+\infty \}.\end{equation*}
For $a=\{a_n\}_{n=1}^{\infty}$, the Hardy-Littlewood-P\'olya operator $\mathbf{H}$ is defined as
$$\mathbf{H}(a)(m):=\sum_{n=1}^{\infty}\frac{a_n}{\max\{m, n\}},  \, m\in \mathbb{N}. $$

It is well known (see \cite[page 254]{HLP}) that 
\begin{theorem}\label{m-0}
Let $p>1$. Then $\mathbf{H}$ is bounded on $l^p$ and the norm of $\mathbf{H}$ is $p+p'$. \end{theorem}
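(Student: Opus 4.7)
My plan is to reduce the boundedness of $\mathbf{H}_p$ on $l^p$ to the classical continuous Hilbert integral inequality on the half-line,
\[
\int_{0}^{\infty}\!\int_{0}^{\infty}\frac{f(x)\,g(y)}{x+y}\,dx\,dy \le \pi\csc(\pi/p)\,\|f\|_{L^p(0,\infty)}\,\|g\|_{L^{p'}(0,\infty)},
\]
whose constant $\pi\csc(\pi/p)$ is known to be sharp. The bridge is the substitution $y=\log n$, which turns $\log(mn)$ into $x+y$; the weights $m^{-1/p}$ and $n^{-1/p'}$ are precisely those required to absorb the Jacobians in the transfer from sums to integrals. For $a\in l^p$, I will define the step function
\[
Ta(y)=\bigl(\log(n/(n-1))\bigr)^{-1/p}\,a_n,\qquad y\in[\log(n-1),\log n),\quad n\ge 2,
\]
so that $\|Ta\|_{L^p(0,\infty)}=\|a\|_{l^p}$, and an analogous isometry $T':l^{p'}\to L^{p'}(0,\infty)$ using the exponent $p'$.

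For the upper bound I argue by duality. Fix $b\in l^{p'}$ with $\|b\|_{p'}=1$ and consider
\[
S:=\sum_{m,n\ge 2}\frac{|a_n||b_m|}{m^{1/p}n^{1/p'}\log(mn)}.
\]
On the rectangle $R_{m,n}:=[\log(m-1),\log m)\times[\log(n-1),\log n)$ the kernel $1/(x+y)$ is bounded below by $1/\log(mn)$, so
\[
\iint_{R_{m,n}}\frac{dx\,dy}{x+y}\ge \frac{\log(m/(m-1))\,\log(n/(n-1))}{\log(mn)}.
\]
Combined with the elementary inequality $k\log(k/(k-1))\ge 1$ for $k\ge 2$ (a consequence of $\log(1+t)\ge t/(1+t)$ at $t=1/(k-1)$), this yields the pointwise domination
\[
\frac{|a_n||b_m|}{m^{1/p}n^{1/p'}\log(mn)}\le \iint_{R_{m,n}}\frac{(T|a|)(y)\,(T'|b|)(x)}{x+y}\,dx\,dy.
\]
Summing over $m,n\ge 2$ (the $R_{m,n}$ tile $(0,\infty)^2$) and invoking the Hilbert integral inequality together with the isometry of $T,T'$ yields $S\le \pi\csc(\pi/p)\,\|a\|_p\|b\|_{p'}$, so duality delivers $\|\mathbf{H}_p\|\le \pi\csc(\pi/p)$.

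For the sharpness, I test on the family $a^{(\varepsilon)}_n=n^{-1/p}(\log n)^{-1/p-\varepsilon}$ for small $\varepsilon>0$, for which $\|a^{(\varepsilon)}\|_p^p\sim 1/(p\varepsilon)$ as $\varepsilon\to 0^+$. Evaluating the inner sum in $\mathbf{H}_p a^{(\varepsilon)}(m)$ by comparison with $\int_{2}^{\infty}\frac{(\log t)^{-1/p-\varepsilon}}{\log m+\log t}\,\frac{dt}{t}$ and performing the substitution $v=\log t/\log m$ reduces the calculation, in the limit $\varepsilon\to 0^+$, to the beta-function integral $\int_0^\infty v^{-1/p}/(1+v)\,dv=\pi\csc(\pi/p)$. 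The main obstacle is the careful term-wise comparison in the upper bound: the precise choice of intervals (shifted one step to the left of $\log n$) together with the sharp inequality $k\log(k/(k-1))\ge 1$ is exactly what prevents any loss of constant in the passage from the discrete multiplicative setting to the continuous Hilbert integral.
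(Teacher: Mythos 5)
Your argument is correct, but it takes a different route from the one this paper uses. The paper does not prove Theorem \ref{m-0} directly: it quotes it from \cite{BPSS} and instead establishes the generalization (Theorem \ref{m-2}), from which Theorem \ref{m-0} is the special case $\alpha=\beta=\gamma=1$, $\mu=\nu=0$, since $B(\frac1p,\frac1{p'})=\pi\csc(\frac\pi p)$. The paper's upper bound is a self-contained Schur-test/H\"older argument: the kernel is split with power weights in $\log m$ and $\log n$ (the functions $E(m)$, $F(n)$ of Lemma \ref{lem-1}), and each weighted row/column sum is compared with an integral to produce the Beta constant. Your upper bound instead transfers the discrete problem to the classical integral Hilbert inequality on $(0,\infty)^2$ via the step-function isometries $T,T'$, using that $1/(x+y)\ge 1/\log(mn)$ on $R_{m,n}=[\log(m-1),\log m)\times[\log(n-1),\log n)$ together with $k\log\frac{k}{k-1}\ge 1$; I checked that this pointwise domination does hold with the exponents you chose ($-1/p$ on the $a$-side, $-1/p'$ on the $b$-side), so no constant is lost and duality gives $\|\mathbf{H}_p\|\le\pi\csc(\pi/p)$. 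What your route buys is brevity and conceptual clarity (the sharp constant is imported wholesale from Hardy--Littlewood--P\'olya); what the paper's route buys is flexibility, since the same Schur-test scheme handles the extra parameters $\alpha,\beta,\mu,\nu$ and yields the general norm $\alpha^{-1/p}\beta^{-1/p'}B(\frac{1+\mu}{p},\frac{p-1-\nu}{p})$, which a naive transference to the $1/(x+y)$ kernel would not give directly. Your sharpness argument is essentially the same as the paper's (test sequences $n^{-1/p}(\log n)^{-1/p-\varepsilon}$ with $\|a^{(\varepsilon)}\|_p^p\sim(p\varepsilon)^{-1}$); the one place you should add detail is the error control there: after bounding the inner sum below by $(\log m)^{-1/p-\varepsilon}\int_{\log 2/\log m}^{\infty}v^{-1/p-\varepsilon}(1+v)^{-1}\,dv$, you must show that the truncation defect $\int_{0}^{\log 2/\log m}$, after raising to the $p$-th power and summing against $m^{-1}(\log m)^{-1-p\varepsilon}$, contributes only $O(1)$ (uniformly for small $\varepsilon$) against the main term of size $\varepsilon^{-1}$; the paper does exactly this with Bernoulli's inequality in (\ref{mo-2})--(\ref{mo-4}), and the same computation completes your sketch.
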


Hardy-Littlewood-P\'olya operator is related to some important topics in analysis and there have been many results about this operator and its analogous and generalizations. The classical results of this topic can be founded in the famous monograph \cite{HLP}.  In the past three decades,  the so-called Hilbert-type operators, including Hardy-Littlewood-P\'olya-type operators,  have been extensively studied by Yang and his coauthors,  see the survey \cite{YR} and Yang's book \cite{Y3}. For more recent results see for example \cite{WHY} and \cite{YZ}.  Fu et al. have studied in \cite{FWL} some $p$-adic Hardy-Littlewood-P\'olya-type operators.Very recently, in the work \cite{B}, Brevig established some norm estimates for certain Hardy-Littlewood-P\'olya-type operators in terms of the Riemann zeta function.  Some further results have been  obtained in \cite{B-1}.  

In this paper, we first introduce and  study the following operator of the Hardy-Littlewood-P\'olya type,
$$\mathbf{H}^{\mu, \nu}_{\alpha, \beta, \gamma}(a)(m):=m^{\frac{1}{p}[(\alpha-1)+\alpha \mu]}\sum_{n=1}^{\infty}
\frac{n^{\frac{1}{p'}[(\beta-1)-(p'-1)\beta \nu]}}{ [\max\{m^{\alpha}, n^{\beta}\}]^{\gamma}}a_n, a=\{a_n\}_{n=1}^{\infty}, m\geq 1, $$
where $\gamma>0$, $0<\alpha, \beta \leq 1$, $-1<\mu, \nu <p-1$.

The operator $\mathbf{H}^{\mu, \nu}_{\alpha, \beta, \gamma}$ reduces to the classical Hardy-Littlewood-P\'olya operator $\mathbf{H}$ when $\alpha=\beta=\gamma=1$, $\mu=\nu=0$. We first study the boundedness of $\mathbf{H}^{\mu, \nu}_{\alpha, \beta, \gamma}$.  We will provide a sufficient and necessary condition for the boundedness of $\mathbf{H}^{\mu, \nu}_{\alpha, \beta, \gamma}$ in terms of the parameters $\gamma, \mu, \nu$ and prove that
\begin{theorem}\label{m-1-1}
Let $p>1$, $\gamma>0$, $0<\alpha, \beta \leq 1$ and $-1<\mu, \nu <p-1$. Let $\mathbf{H}^{\mu, \nu}_{\alpha, \beta, \gamma}$ be defined as obove. Then  $\mathbf{H}^{\mu, \nu}_{\alpha, \beta, \gamma}$ is bounded on $l^p$  if and only if $p(\gamma-1)-(\mu-\nu)\geq 0.$\end{theorem}

When $p(\gamma-1)-(\mu-\nu)= 0$, i.e.,  $\gamma=1+\frac{\mu-\nu}{p}$, we use $\widetilde{\mathbf{H}}^{\mu, \nu}_{\alpha, \beta}$ to denote the operator $\mathbf{H}^{\mu, \nu}_{\alpha, \beta, \gamma}$.  That is to say, 
$$\widetilde{\mathbf{H}}^{\mu, \nu}_{\alpha, \beta}(a)(m):=m^{\frac{1}{p}[(\alpha-1)+\alpha \mu]}\sum_{n=1}^{\infty}
\frac{n^{\frac{1}{p'}[(\beta-1)-(p'-1)\beta \nu]}}{ [\max\{m^{\alpha}, n^{\beta}\}]^{1+\frac{\mu-\nu}{p}}}a_n,a=\{a_n\}_{n=1}^{\infty},m\geq 1.$$

We denote by  $\|\widetilde{\mathbf{H}}^{\mu, \nu}_{\alpha, \beta}\|$ the norm of $\widetilde{\mathbf{H}}^{\mu, \nu}_{\alpha, \beta}$. We will show the following result, which is an extention of Theorem \ref{m-0}.
\begin{theorem}\label{m-1-2}
Let $p>1$, $0<\alpha, \beta \leq 1$ and $-1<\mu, \nu <p-1$. Let $\widetilde{\mathbf{H}}^{\mu, \nu}_{\alpha, \beta}$ be defined as above.  Then  $\widetilde{\mathbf{H}}^{\mu, \nu}_{\alpha, \beta}$ is bounded on $l^p$ and
\begin{equation}\label{norm}\|\widetilde{\mathbf{H}}^{\mu, \nu}_{\alpha, \beta}\|= \frac{p}{{\alpha}^{\frac{1}{p}}{\beta}^{\frac{1}{p'}}}\left(\frac{1}{1+\mu}+\frac{1}{p-1-\nu}\right).\end{equation}  \end{theorem}

When $\alpha=\beta=1$.  From Theorem \ref{m-1-1}, we know that the operator 
$$\mathbf{H}^{\mu, \nu}_{1, 1, \gamma}(a)(m)=\sum_{n=1}^{\infty} \frac{m^{\frac{\mu}{p}}n^{-\frac{\nu}{p}}}{[\max\{m, n\}]^{\gamma}}a_n, a=\{a_n\}_{n=1}^{\infty},m\geq 1, $$
is not bounded on $l^p$ when $\gamma<1+\frac{\mu-\nu}{p}$. On the one hand, we note that
$$\int_{[0, 1)}t^{\max\{m, n\}-1}(1-t)^{\gamma-1}dt=B(\max\{m, n\}, \gamma)=\frac{\Gamma(\max\{m, n\})\Gamma(\gamma)}{\Gamma(\gamma+\max\{m, n\})}, $$
for $\gamma>0$, $m, n\geq 1$.  Here $B(\cdot, \cdot)$ is the Beta function, which is defined as
$$B(u,v):=\int_{0}^{1}t^{u-1}(1-t)^{v-1}\,dt,\: u>0,v>0.$$
The Gamma function $\Gamma(\cdot) $ is defined as
$$\Gamma(x)=\int_{0}^{\infty}e^{-t} t^{x-1}\,dt,\: x>0.$$
It is known that
$$B(u,v)=\frac{\Gamma(u)\Gamma{(v)}}{\Gamma(u+v)}.$$
For more introductions to these special functions, see \cite{AAR}. On the other hand,  we see from 
\begin{equation}\label{g}
\Gamma(x) = \sqrt{2\pi} x^{x-\frac{1}{2}}e^{-x}[1+r(x)],\, |r(x)|\leq e^{\frac{1}{12x}}-1,\, x>0,\end{equation}
that
$$\frac{\Gamma(\max\{m, n\})\Gamma(\gamma)}{\Gamma(\gamma+\max\{m, n\})} \asymp \frac{1}{[\max\{m, n\}]^{\gamma}},\, \gamma>0, m, n\geq 1.$$
Hence, in order to make $\mathbf{H}^{\mu, \nu}_{1, 1, \gamma}$ to be bounded on $l^p$  when $\gamma<1+\frac{\mu-\nu}{p}$, we let $\lambda$ be a positive Borel measure in $[0, 1)$, and consider the following operator
$$\widehat{\mathbf{H}}^{\mu, \nu}_{\gamma, \lambda}(a)(m):=\sum_{n=1}^{\infty} m^{\frac{\mu}{p}}n^{-\frac{\nu}{p}}\mathcal{I}_\lambda[m,n]a_n,  \, a=\{a_n\}_{n=1}^{\infty}, \,  m\geq 1.$$
Where
\begin{equation}\label{mea}\mathcal{I}_\lambda[m, n]=\int_{[0, 1)}t^{\max\{m, n\}-1}(1-t)^{\gamma-1}d\lambda(t), \,m, n\geq 1.\end{equation}

We will characterize measures $\lambda$ such that $\widehat{\mathbf{H}}^{\mu, \nu}_{\gamma, \lambda}$ is bounded (compact) on $l^p$. To state our results, we introduce the notion of generalized Carleson measure on $[0,1)$. Let $s>0$, let  $\lambda$ be a positive Borel measure on $[0,1)$, we say $\lambda$ is an $s$-Carleson measure if there is a constant $C>0$ such that $$\lambda([t, 1))\leq C (1-t)^s$$ holds for all $t\in [0, 1)$. 
Moreover, an $s$-Carleson measure $\lambda$ on $[0, 1)$  is  said to be a vanishing $s$-Carleson measure, if  it satisfies further that
$$\lim_{t\rightarrow 1^{-}}\frac{\lambda([t, 1))}{(1-t)^s}=0.$$

We shall prove the following criterion for the boundedness of $\widehat{\mathbf{H}}^{\mu, \nu}_{\gamma, \lambda}$.
\begin{theorem}\label{m-1-3}
Let $p>1, \gamma>0$ and  $-1<\mu, \nu<p-1$.  Let $\lambda$ be a positive Borel measure on $[0, 1)$ such that $d\rho(t):=(1-t)^{\gamma-1}d\lambda(t)$ is a finite measure on $[0, 1)$, and  $\widehat{\mathbf{H}}^{\mu, \nu}_{\gamma, \lambda}$ be defined as above. Then $\widehat{\mathbf{H}}^{\mu, \nu}_{\gamma, \lambda}$ is bounded on $l^p$ if and only if
$\rho$ is a $[1+\frac{1}{p}(\mu-\nu)]$-Carleson measure on $[0, 1)$.
\end{theorem}

For the compactness of $\widehat{\mathbf{H}}^{\mu, \nu}_{\gamma, \lambda}$, we shall show that
 \begin{theorem}\label{m-1-4}
Let $p>1, \gamma>0$ and  $-1<\mu, \nu<p-1$.  Let $\lambda$ be a positive Borel measure on $[0, 1)$ such that $d\rho(t):=(1-t)^{\gamma-1}d\lambda(t)$ is a finite measure on $[0, 1)$, and  $\widehat{\mathbf{H}}^{\mu, \nu}_{\gamma, \lambda}$ be defined as above. Then $\widehat{\mathbf{H}}^{\mu, \nu}_{\gamma, \lambda}$ is compact on $l^p$ if and only if
$\rho$ is a vanishing $[1+\frac{1}{p}(\mu-\nu)]$-Carleson measure on $[0, 1)$.
\end{theorem}

The paper is organized as follows.  Two lemmas will be given in the next section.  We will first prove Theorem \ref{m-1-2} in Section 3. The proof of Theorem \ref{m-1-1} will be given in Section 4.  We prove Theorem \ref{m-1-3} and \ref{m-1-4} in Section 5. Final remarks will be presented in Section 6.

\section{\bf {Two lemmas}}

We need the following lemmas in the proof of our main results of this paper. 
\begin{lemma}\label{lem-1}
Let $p>1$, $0<\alpha, \beta \leq 1$ and $-1<\mu, \nu <p-1$.  We define
 $$E(m):=\sum_{n=1}^{\infty}
\frac{n^{\beta-1}}{[\max\{m^{\alpha}, n^{\beta}\}]^{1+\frac{\mu-\nu}{p}}} \cdot \frac{m^{\frac{\alpha(1+\mu)}{p}}}{n^{\frac{\beta(1+\nu)}{p}}}, \,  m\geq 1; $$
$$F(n):=\sum_{m=1}^{\infty}
\frac{m^{\alpha-1}}{[\max\{m^{\alpha}, n^{\beta}\}]^{1+\frac{\mu-\nu}{p}}}\cdot \frac{n^{\frac{\beta(p-1-\nu)}{p}}}{m^{\frac{\alpha(p-1-\mu)}{p}}}, \,  n\geq 1. $$
Then we have
\begin{equation}\label{ineq-1}E(m)\leq \frac{p}{\beta}\left(\frac{1}{1+\mu}+\frac{1}{p-1-\nu}\right),\, m\geq 1,\end{equation}
\begin{equation} \label{ineq-2}F(n)\leq \frac{p}{\alpha}\left(\frac{1}{1+\mu}+\frac{1}{p-1-\nu}\right),\, n\geq 1. \end{equation}
\end{lemma}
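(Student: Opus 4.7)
The plan is to bound each of $E(m)$ and $F(n)$ via an integral comparison and then evaluate the resulting integral by a chain of substitutions that reduces it to the standard Beta integral $B(u,v)=\int_0^\infty t^{u-1}(1+t)^{-(u+v)}\,dt$.

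For $E(m)$, I would first factor out the $m$-dependent constant and write $E(m) = (\log m)^{\alpha(1+\mu)/p}\sum_{n=2}^\infty g(n)$, where
$$g(y) := \frac{(\log y)^{\beta - 1 - \beta(1+\nu)/p}}{y\bigl[(\log m)^{\alpha} + (\log y)^{\beta}\bigr]^{1+(\mu-\nu)/p}},\qquad y>1.$$
The first step is to verify that $g$ is positive and decreasing on $(1,\infty)$, which, by the integral test, yields $\sum_{n=2}^\infty g(n) \le \int_1^\infty g(y)\,dy$. Setting $c := (\log m)^\alpha$ and $a := \beta(p-1-\nu)/p - 1$, and introducing $\psi(u) := u^a/(c+u^\beta)^{1+(\mu-\nu)/p}$, we have $g(y) = \psi(\log y)/y$, and a direct differentiation yields $\psi'(u)/\psi(u) = a/u - (1+(\mu-\nu)/p)\beta u^{\beta-1}/(c+u^\beta)$. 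The hypotheses $0<\beta\le 1$ and $\nu>-1$ force $a<0$, hence $\psi'\le 0$, and then $g$ is decreasing on $(1,\infty)$.

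Next, the plan is to evaluate $\int_1^\infty g(y)\,dy$ via the substitution chain $u=\log y$, then $s=u^\beta$, and finally $s=ct$. The first produces $\int_0^\infty u^a/(c+u^\beta)^{1+(\mu-\nu)/p}\,du$; the second rewrites this as $\frac{1}{\beta}\int_0^\infty s^{(p-1-\nu)/p - 1}/(c+s)^{1+(\mu-\nu)/p}\,ds$; and the third extracts the factor $c^{-(1+\mu)/p}$ (the exponent of $c$ simplifies because $(p-1-\nu)/p - 1 - (\mu-\nu)/p = -(1+\mu)/p$), leaving exactly $B\bigl((p-1-\nu)/p,(1+\mu)/p\bigr)$ by the definition of the Beta function. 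The prefactor $(\log m)^{\alpha(1+\mu)/p} = c^{(1+\mu)/p}$ cancels this $c^{-(1+\mu)/p}$, giving \eqref{ineq-1}.

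Inequality \eqref{ineq-2} follows by the same recipe with the roles of $(\alpha,\mu)$ and $(\beta,\nu)$ interchanged: one factors $F(n) = (\log n)^{\beta(p-1-\nu)/p}\sum_{m=2}^\infty h(m)$, in which the analogous monotonicity exponent on $\log x$ is $\alpha(1+\mu)/p - 1$, negative by $\alpha\le 1$ and $\mu<p-1$. Then the substitutions $u=\log x$, $s=u^\alpha$, $s=(\log n)^\beta t$ recover the same Beta integral, with the prefactor $(\log n)^{\beta(p-1-\nu)/p}$ cancelling the $(\log n)^{-\beta(p-1-\nu)/p}$ produced by the final scaling. The main obstacle is the monotonicity verification: it is precisely the parameter constraints $0<\alpha,\beta\le 1$ and $-1<\mu,\nu<p-1$ that secure the right sign on the logarithmic exponents and so justify the integral comparison; once monotonicity is in hand, the remainder is a routine chain of substitutions.
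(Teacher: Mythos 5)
Your proposal is correct and follows essentially the same route as the paper: bound the sum by an integral comparison (the paper does this tacitly, without the explicit monotonicity check you supply) and then reduce to the Beta integral via the substitutions $s=(\log x)^{\beta}$ (resp.\ $(\log x)^{\alpha}$) and $s=ct$. One tiny point: your sign argument for $\psi'\le 0$ also uses $1+\frac{\mu-\nu}{p}>0$, which indeed follows from $-1<\mu$ and $\nu<p-1$, so the conclusion stands.
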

\begin{proof}
In view of the assumption, we see that, for $ m\geq 1$, 
\begin{equation*}
E(m)\leq \int_{0}^{\infty}
\frac{x^{\beta-1}}{[\max\{m^{\alpha}, x^{\beta}\}]^{1+\frac{\mu-\nu}{p}}} \cdot \frac{m^{\frac{\alpha(1+\mu)}{p}}}{x^{\frac{\beta(1+\nu)}{p}}}\,dx.
\end{equation*}
Consequently,  by the change of variables $s=x^{\beta}$, we obtain that
\begin{eqnarray}
E(m) &\leq&\frac{1}{\beta} \int_{0}^{\infty}
\frac{1}{[\max\{m^{\alpha}, s\}]^{1+\frac{\mu-\nu}{p}}} \cdot \frac{m^{\frac{\alpha(1+\mu)}{p}}}{s^{\frac{1+\nu}{p}}}\,ds \nonumber \\
&=& \frac{1}{\beta}  \int_{0}^{\infty}
\frac{t^{-\frac{1+\nu}{p}}}{[\max\{1, t\}]^{1+\frac{\mu-\nu}{p}}}\,dt\nonumber \\
&=& \frac{1}{\beta}  \int_{0}^{1}t^{-\frac{1+\nu}{p}}dt+\frac{1}{\beta}  \int_{1}^{\infty}t^{-\frac{1+\mu}{p}-1}dt  \nonumber \\
&=& \frac{p}{\beta}\left(\frac{1}{1+\mu}+\frac{1}{p-1-\nu}\right). \nonumber
\end{eqnarray}
This proves (\ref{ineq-1}).  By the similar way, we can obtain that (\ref{ineq-2}) also holds. The lemma is proved.
\end{proof}

\begin{lemma}\label{lem}
Let $\gamma>0, -1<\mu, \nu<p-1$. Let $\lambda$ be a positive Borel measure on $[0, 1)$ and $\mathcal{I}_{\lambda}[m, n]$ be defined as in (\ref{mea}) for $m, n\geq 1$. Set $d\rho(t)=(1-t)^{\gamma-1}d \lambda(t)$.  If $\rho$ is a  $[1+\frac{1}{p}(\mu-\nu)]$-Carleson measure on $[0, 1)$, then
\begin{equation}\label{mun-1}\mathcal{I}_{\lambda}[m, n] \preceq \frac{1}{[\max\{m, n\}]^{1+\frac{1}{p}(\mu-\nu)}}\end{equation}
holds for all $m, n\geq 1$. Furthermore, if $\rho$ is a  vanishing $[1+\frac{1}{p}(\mu-\nu)]$-Carleson measure on $[0, 1)$, then
 \begin{equation}\label{mun-2}
 \mathcal{I}_{\lambda}[m, n] =o\left(\frac{1}{[\max\{m, n\}]^{1+\frac{1}{p}(\mu-\nu)}}\right), \,\,\max\{m, n\}\rightarrow \infty.\end{equation}
\end{lemma}
\begin{proof}
When  $m\geq 1, n\geq 2$, or $m\geq 2, n\geq 1$. We get from integration by parts that
\begin{eqnarray}
\mathcal{I}_{\lambda}[m, n]&=&\int_{0}^1 t^{\max\{m, n\}-1}d\rho(t) \nonumber \\&=&\rho([0,1))-(\max\{m, n\}-1)\int_{0}^1 t^{\max\{m, n\}-2}\rho([0, t))dt \nonumber \\
&=& (\max\{m, n\}-1)\int_{0}^1 t^{\max\{m, n\}-2}\rho([t, 1))dt.\nonumber
\end{eqnarray}
If $\rho$ is a $[1+\frac{1}{p}(\mu-\nu)]$-Carleson measure on $[0, 1)$, then we see that there is a constant $C_1>0$ such that
$$\rho([t,1))\leq C_1 (1-t)^{1+\frac{1}{p}(\mu-\nu)}$$
holds for all $t\in [0,1)$. It follows that
\begin{eqnarray}\mathcal{I}_{\lambda}[m, n] &\leq & C_1(\max\{m, n\}-1)\int_{0}^1 t^{\max\{m, n\}-2}(1-t)^{1+\frac{1}{p}(\mu-\nu)}dt\nonumber \\&=&C_1 \frac{(\max\{m, n\}-1)\Gamma(\max\{m, n\}-1)\Gamma(2+\frac{1}{p}(\mu-\nu))}{\Gamma(\max\{m, n\}+1+\frac{1}{p}(\mu-\nu))}.\nonumber \end{eqnarray}
By using (\ref{g}) again, we obtain that
$$\frac{(\max\{m, n\}-1)\Gamma(\max\{m, n\}-1)\Gamma(2+\frac{1}{p}(\mu-\nu))}{\Gamma(\max\{m, n\}+1+\frac{1}{p}(\mu-\nu))}
 \asymp \frac{1}{\max\{m, n\}^{1+\frac{1}{p}(\mu-\nu)}}.$$
It follows that (\ref{mun-1}) holds for $m\geq 1, n\geq 2$ or $m\geq 2, n\geq 1$.

Next we consider the case $m=n=1$, we see from the fact $\rho$ is a finite measure on $[0,1)$ that
\begin{equation}
\mathcal{I}_{\lambda}[1, 1]=\int_{0}^1 d\rho(t)=\rho([0, 1))\preceq 1.\nonumber
\end{equation}
Then we get that  (\ref{mun-1}) holds for all $m, n\geq 1$.  Similarly, if $\rho$ is a vanishing $[1+\frac{1}{p}(\mu-\nu)]$-Carleson measure on $[0, 1)$, by minor modifications of above arguments, we can show that (\ref{mun-2}) holds.  The lemma is proved.
\end{proof}

\section{\bf {Proof of Theorem \ref{m-1-2}}}
For $a=\{a_n\}_{n=1}^{\infty} \in l^p, m\geq 1$,  we have
\begin{eqnarray}\lefteqn{m^{\frac{1}{p}[(\alpha-1)+\alpha \mu]}\left|\sum_{n=1}^{\infty}
\frac{n^{\frac{1}{p'}[(\beta-1)-(p'-1)\beta \nu]}}{ [\max\{m^{\alpha}, n^{\beta}\}]^{1+\frac{\mu-\nu}{p}}}a_n\right|}\nonumber  \\&&\leq \sum_{n=1}^{\infty} \left\{[K(m,n)]^{\frac{1}{p}}O_1(m, n)\cdot [K(m,n)]^{\frac{1}{p'}}O_2(m, n)\right\}:=I(m).\nonumber \end{eqnarray}
Where
\begin{equation*}
{K}(m, n)=\frac{1}{[\max\{m^{\alpha}, n^{\beta}\}]^{1+\frac{\mu-\nu}{p}}},
\end{equation*}
\begin{equation*}
{O}_1(m, n)=
\frac{n^{\frac{\beta(1+\nu)}{pp'}-\frac{\beta \nu}{p}}}
{m^{\frac{\alpha(p-1-\mu)}{p^2}}}
\cdot
m^{\frac{1}{p}(\alpha-1)}\cdot |a_n|,
\end{equation*} 
\begin{equation*}
{O}_2(m, n)=
\frac{m^{\frac{\alpha(p-1-\mu)}{p^2}+\frac{\alpha\mu}{p}}}
{n^{\frac{\beta(1+\nu)}{pp'}}}
\cdot
n^{\frac{1}{p'}(\beta-1)}.
\end{equation*}
Applying the H\"{o}lder's inequality on $I(m)$, we get from (\ref{ineq-1}) that
\begin{eqnarray}
I(m) &\leq & \left[\sum_{n=1}^{\infty}{K}(m, n)[{O}_1(m, n)]^p \right]^{\frac{1}{p}}\left[\sum_{n=1}^{\infty}{K}(m, n)[{O}_2(m, n)]^{p'} \right]^{\frac{1}{p'}} \nonumber \\
& =& [{E}(m)]^{\frac{1}{p'}}\left[\sum_{n=1}^{\infty}{K}(m, n)[{O}_1(m, n)]^p \right]^{\frac{1}{p}}. \nonumber
\end{eqnarray}
It follows from  (\ref{ineq-2}) that
\begin{eqnarray}
\lefteqn{\|\widetilde{\mathbf{H}}^{\mu, \nu}_{\alpha, \beta}a\|_p=[\sum_{m=1}^{\infty}I^p(m) ]^{\frac{1}{p}}}  \nonumber \\
& \leq&  \frac{1}{{\beta}^{\frac{1}{p'}}} \left(\frac{p}{1+\mu}+\frac{p}{p-1-\nu}\right)^{\frac{1}{p'}}\left[\sum_{n=1}^{\infty}\sum_{m=1}^{\infty}{K}(m, n)[{O}_1(m, n)]^p \right]^{\frac{1}{p}} \nonumber \\
&=& \frac{1}{{\beta}^{\frac{1}{p'}}} \left(\frac{p}{1+\mu}+\frac{p}{p-1-\nu}\right)^{\frac{1}{p'}} \left[\sum_{n=1}^{\infty}{F}(n) |a_n|^p \right]^{\frac{1}{p}} \nonumber \\
& \leq&  \frac{p}{{\alpha}^{\frac{1}{p}}{\beta}^{\frac{1}{p'}}}\left(\frac{1}{1+\mu}+\frac{1}{p-1-\nu}\right)\|a\|_p. \nonumber
\end{eqnarray}
This means that $\widetilde{\mathbf{H}}^{\mu, \nu}_{\alpha, \beta}$ is bounded on $l^p$ and
\begin{equation}\label{low}\|\widetilde{\mathbf{H}}^{\mu, \nu}_{\alpha, \beta}\|\leq  \frac{p}{{\alpha}^{\frac{1}{p}}{\beta}^{\frac{1}{p'}}}\left(\frac{1}{1+\mu}+\frac{1}{p-1-\nu}\right).\end{equation}
For $\varepsilon>0$, we take $\widetilde{a}=\{\widetilde{a}_n\}_{n=1}^{\infty}$ with $\widetilde{a}_n=\varepsilon^{\frac{1}{p}}n^{-\frac{1+\beta\varepsilon}{p}}$. On the one hand, we have
$$\|\widetilde{a}\|_p^p=\varepsilon \sum_{n=1}^{\infty} n^{-1-\beta\varepsilon}\geq \varepsilon \int_{1}^{\infty} x^{-1-\beta\varepsilon}\, dx=\frac{1}{\beta}.$$
On the other hand, we have
$$\|\widetilde{a}\|_p^p=\varepsilon+ \sum_{n=2}^{\infty}n^{-1-\beta\varepsilon}\leq \varepsilon+\varepsilon\int_{1}^{\infty} x^{-1-\beta\varepsilon}\, dx=\varepsilon+\frac{1}{\beta}.$$
Thus, we obtain that
\begin{equation}\label{est-0}
\|\widetilde{a}\|_p^p=\frac{1}{\beta}(1+o(1)),\,\,   \,\,\varepsilon \rightarrow 0^{+}.
\end{equation}
We write
\begin{eqnarray}\label{shar-1}
\|\widetilde{\mathbf{H}}^{\mu, \nu}_{\alpha, \beta}\widetilde{a}\|_p^p=\varepsilon \sum_{m=1}^{\infty}m^{(\alpha-1)+\alpha \mu }\cdot [J(m)]^p.
\end{eqnarray}
Here $$J(m):=\sum_{n=1}^{\infty}\frac{n^{\frac{1}{p'}[(\beta-1)-(p'-1)\beta \nu]}\cdot n^{-\frac{1+\beta\varepsilon}{p}}}{[\max\{m^{\alpha}, n^{\beta}\}]^{1+\frac{\mu-\nu}{p}}}.$$
In view of  the assumption $0<\beta\leq 1,  -1<\nu<p-1$, we have $1+\beta \nu\geq 0$. Hence we get that
\begin{eqnarray}\frac{1}{p'}[(\beta-1)-(p'-1)\beta \nu]-\frac{1+\beta \varepsilon}{p}=\frac{1}{p'}(\beta-1)-\frac{1+\beta \nu+\beta\varepsilon}{p}<0. \nonumber \end{eqnarray}
Consequently,
\begin{eqnarray}\label{shar-2}
J(m)&\geq& \int_{1}^{\infty} \frac{x^{\frac{1}{p'}[(\beta-1)-(p'-1)\beta \nu]}\cdot x^{-\frac{1+\beta\varepsilon}{p}}}{[\max\{m^{\alpha}, x^{\beta}\}]^{1+\frac{\mu-\nu}{p}}}\,dx \nonumber \\&=&\frac{1}{\beta} \int_{1}^{\infty} \frac{s^{-\frac{1+\nu+\varepsilon}{p}}}{[\max\{m^{\alpha}, s\}]^{1+\frac{\mu-\nu}{p}}}\,ds\nonumber \\ &=&\frac{1}{\beta}m^{-\frac{\alpha}{p}(1+\mu+\varepsilon)} \int_{\frac{1}{m^{\alpha}}}^{\infty} \frac{t^{-\frac{1+\nu+\varepsilon}{p}}}{[\max\{1, t\}]^{1+\frac{\mu-\nu}{p}}}\,dt.
\end{eqnarray}
Also, for $0<\varepsilon< p-1-\nu$, we have
\begin{eqnarray}\label{shar-3}
\lefteqn{\int_{\frac{1}{m^{\alpha}}}^{\infty} \frac{t^{-\frac{1+\nu+\varepsilon}{p}}}{[\max\{1, t\}]^{1+\frac{\mu-\nu}{p}}}\,dt=\int_{0}^{\infty} \frac{t^{-\frac{1+\nu+\varepsilon}{p}}}{[\max\{1, t\}]^{1+\frac{\mu-\nu}{p}}}\,dt -\int_{0}^{\frac{1}{m^{\alpha}}} t^{-\frac{1+\nu+\varepsilon}{p}}\,dt  }\nonumber \\
 &&= p\left(\frac{1}{1+\mu}+\frac{1}{p-1-\nu-\varepsilon}\right)-\frac{p}{p-1-\nu-\varepsilon}m^{-\frac{\alpha(p-1-\nu-\varepsilon)}{p}}
   \nonumber
\\
&&:=L(\varepsilon)-Q(m). \end{eqnarray}
Combining (\ref{shar-1}), (\ref{shar-2}) and (\ref{shar-3}), we get that
\begin{eqnarray}\label{mo-1}
\|\widetilde{\mathbf{H}}^{\mu, \nu}_{\alpha, \beta}\widetilde{a}\|_p^p&\geq &\frac{\varepsilon}{\beta^p}\sum_{m=1}^{\infty} m^{-1-\alpha \varepsilon}\cdot [L(\varepsilon)-Q(m)]^p,
\end{eqnarray}
for $0<\varepsilon< p-1-\nu$.
By using the Bernoulli's inequality(see \cite{M}), we obtain that
\begin{equation}\label{mo-2}
[L(\varepsilon)-Q(m)]^p \geq  [L(\varepsilon)]^p \left [1-\frac{p^2}{L(\varepsilon)(p-1-\nu-\varepsilon)}m^{-\frac{\alpha(p-1-\nu-\varepsilon)}{p}}\right],
\end{equation}
for $0<\varepsilon< p-1-\nu$.
From (\ref{mo-1}) and (\ref{mo-2}), we obtain that
\begin{eqnarray}\label{mo-3}
\|\widetilde{\mathbf{H}}^{\mu, \nu}_{\alpha, \beta}\widetilde{a}\|_p^p&\geq &\frac{\varepsilon}{\beta^p}[L(\varepsilon)]^p\sum_{m=1}^{\infty} m^{-1-\alpha \varepsilon}
\nonumber \\& &-\frac{\varepsilon p^2}{\beta^{p}L(\varepsilon)(p-1-\nu-\varepsilon)}\sum_{m=1}^{\infty} m^{-1-\alpha \varepsilon-\frac{\alpha(p-1-\nu-\varepsilon)}{p}}.
\end{eqnarray}
We note that
\begin{eqnarray}\label{mo-4}
\varepsilon \sum_{m=1}^{\infty}m^{-1-\alpha \varepsilon}=\frac{1}{\alpha}(1+o(1)), \,\, \,\, \varepsilon \rightarrow 0^{+},
\end{eqnarray}
and, for $0<\varepsilon<p-1-\nu$,
\begin{eqnarray}\label{mo-5}
\sum_{m=1}^{\infty}m^{-1-\alpha \varepsilon-\frac{\alpha(p-1-\nu-\varepsilon)}{p}}=O(1), \,\, \varepsilon \rightarrow 0^{+}. 
\end{eqnarray}
It follows from (\ref{mo-3})-(\ref{mo-5}) that
\begin{eqnarray}
\|\widetilde{\mathbf{H}}^{\mu, \nu}_{\alpha, \beta}\widetilde{a}\|_p^p&\geq &\frac{1}{\alpha\beta^p}(1+o(1))\cdot [L(\varepsilon)]^p\cdot [1-\varepsilon O(1)]. \nonumber
\end{eqnarray}
Hence, by (\ref{est-0}), we get that
\begin{eqnarray}
\|\widetilde{\mathbf{H}}^{\mu, \nu}_{\alpha, \beta}\| &\geq &
\frac{\|\widetilde{\mathbf{H}}^{\mu, \nu}_{\alpha, \beta}\widetilde{a}\|_p}{\|\widetilde{a}\|_p}\geq \frac{\frac{1}{\alpha^{\frac{1}{p}}\beta}(1+o(1))\cdot [L(\varepsilon)]\cdot [1-\varepsilon O(1)]^{\frac{1}{p}}}{\frac{1}{\beta^{\frac{1}{p}}}(1+o(1))}. \nonumber
\end{eqnarray}
Take $\varepsilon \rightarrow 0^{+}$, we see that
\begin{equation}\label{big}
\|\widetilde{\mathbf{H}}^{\mu, \nu}_{\alpha, \beta}\|\geq \frac{p}{{\alpha}^{\frac{1}{p}}{\beta}^{\frac{1}{p'}}}\left(\frac{1}{1+\mu}+\frac{1}{p-1-\nu}\right).\end{equation}
Combining (\ref{low}) and (\ref{big}), we see that (\ref{norm}) is true and the proof of Theorem \ref{m-1-2} is finished.

\section{\bf {Proof of Theorem \ref{m-1-1}}}
We first prove the if part.  If $p(\gamma-1)-(\mu-\nu)\geq 0$, that is $\gamma\geq 1+\frac{\mu-\nu}{p}$,  then, for $a=\{a_n\}_{n=1}^{\infty}, \,  m\geq 1$,  it is easy to see that
$$\left|\sum_{n=1}^{\infty}
\frac{n^{\frac{1}{p'}[(\beta-1)-(p'-1)\beta \nu]}}{[\max\{m^{\alpha}, n^{\beta}\}]^{\gamma}}a_n\right|\leq
\sum_{n=1}^{\infty}
\frac{n^{\frac{1}{p'}[(\beta-1)-(p'-1)\beta \nu]}}{[\max\{m^{\alpha}, n^{\beta}\}]^{1+\frac{\mu-\nu}{p}}}|a_n|.$$
Consequently, in view of the boundedness of $\widetilde{\mathbf{H}}^{\mu, \nu}_{\alpha, \beta}$, we conclude that $\mathbf{H}^{\mu, \nu}_{\alpha, \beta, \gamma}$ is bounded on $l^p$ when $p(\gamma-1)-(\mu-\nu)\geq 0$.

Next, we prove the only if part.  We will show that, if $p(\gamma-1)-(\mu-\nu)<0$, then $\mathbf{H}^{\mu, \nu}_{\alpha, \beta, \gamma}$ can not be bounded on $l^p$.
Actually, let $\varepsilon>0$, we still take $\widetilde{a}=\{\widetilde{a}_n\}_{n=1}^{\infty}$ with $\widetilde{a}_n=\varepsilon^{\frac{1}{p}} n^{-\frac{1+\beta\varepsilon}{p}}$.  We have 
\begin{equation}\label{jia-1}\|\widetilde{a}\|_p^p=\frac{1}{\beta}(1+o(1)), \,  \varepsilon \rightarrow 0^{+}.\end{equation}
It follows that
\begin{eqnarray}\label{3-0}\|\mathbf{H}^{\mu, \nu}_{\alpha, \beta, \gamma} \widetilde{a}\|_{p}^p&=&
\sum_{m=1}^{\infty}m^{(\alpha-1)+\alpha \mu}\left[\sum_{n=1}^{\infty}
\frac{n^{\frac{1}{p'}[(\beta-1)-(p'-1)\beta \nu]}\cdot n^{-\frac{1+\beta\varepsilon}{p}}}{[\max\{m^{\alpha}, n^{\beta}\}]^{\gamma}}\right]^p \nonumber \\ & :=& 
 \sum_{m=1}^{\infty}m^{(\alpha-1)+\alpha \mu}\cdot [R(m)]^p. 
\end{eqnarray}
On the other hand, we have, for $m \geq 1$,
\begin{eqnarray}\label{3-1}
R(m)&\geq& \int_{1}^{\infty} \frac{x^{\frac{1}{p'}[(\beta-1)-(p'-1)\beta \nu]}\cdot x^{-\frac{1+\beta\varepsilon}{p}}}{[\max\{m^{\alpha}, x^{\beta}\}]^{\gamma}}\,dx  \\
&=&\frac{1}{\beta} \int_{1}^{\infty} \frac{s^{-\frac{1+\nu+\varepsilon}{p}}}{[\max\{m^{\alpha}, s\}]^{\gamma}}\,ds \nonumber  \\
&=&\frac{1}{\beta}m^{-\frac{\alpha}{p}[p(\gamma-1)+(1+\nu+\varepsilon)]} \int_{\frac{1}{m^{\alpha}}}^{\infty} \frac{t^{-\frac{1+\nu+\varepsilon}{p}}}{[\max\{1, t\}]^{\gamma}}\,dt  \nonumber \\
&\geq &  \frac{1}{\beta}m^{-\frac{\alpha}{p}[p(\gamma-1)+(1+\nu+\varepsilon)]} \int_{1}^{\infty}t^{-\frac{1+\nu+\varepsilon}{p}-\gamma}\,dt .\nonumber
\end{eqnarray}
Since  $p(\gamma-1)-(\mu-\nu)<0$, i.e., $\gamma<1+\frac{\mu-\nu}{p}$, we get that 
\begin{equation}\label{3-2} \int_{1}^{\infty}t^{-\frac{1+\nu+\varepsilon}{p}-\gamma}\,dt  \geq  \int_{1}^{\infty}t^{-\frac{1+\mu+\varepsilon}{p}-1}\,dt=\frac{p}{1+\mu+\varepsilon}.\end{equation}
Consequently, from (\ref{3-0})-(\ref{3-2}), we obtain that 
\begin{eqnarray}\|\mathbf{H}^{\mu, \nu}_{\alpha, \beta, \gamma} \widetilde{a}\|_{p}^p&\geq &\frac{p^p}{[\beta(1+\mu+\varepsilon)]^p}\left[\sum_{m=1}^{\infty} m^{\alpha[p(\gamma-1)+(\mu-\nu)-\varepsilon]-1}\right]. \nonumber
\end{eqnarray}
We suppose that $\mathbf{H}^{\mu, \nu}_{\alpha, \beta, \gamma}: l^p \rightarrow l^p$ is bounded, it follows from (\ref{jia-1}) that  
\begin{eqnarray}\label{c-1}
+\infty&>&\frac{\|\mathbf{H}^{\mu, \nu}_{\alpha, \beta, \gamma}\widetilde{a}\|_p^p}{\|\widetilde{a}\|_p^p}\nonumber\\&\geq&(1+o(1)) \frac{p^p}{[\beta(1+\mu+\varepsilon)]^p}\left[\sum_{m=1}^{\infty} m^{\alpha[p(1-\gamma)+(\mu-\nu)-\varepsilon]-1}\right].
\end{eqnarray}
However, by $p(\gamma-1)-(\mu-\nu)<0$, we know that $p(1-\gamma)+(\mu-\nu)>0$.  Hence, when $\varepsilon<p(1-\gamma)+(\mu-\nu)$, we see from $p(1-\gamma)+(\mu-\nu)-\varepsilon:=\theta>0$ that
$$\sum_{m=1}^{\infty}m^{\alpha[p(1-\gamma)+(\mu-\nu)-\varepsilon]-1} =\sum_{m=1}^{\infty}m^{\theta\alpha-1} =+\infty.$$
Thus we get that (\ref{c-1}) is a contradiction. This proves that $\mathbf{H}^{\mu, \nu}_{\alpha, \beta, \gamma}$ can not be bounded on $l^p$, if $p(\gamma-1)-(\mu-\nu)<0$. Theorem \ref{m-1-1} is proved.

\section{\bf {Proof of Theorem \ref{m-1-3} and \ref{m-1-4}}}

We shall first prove Theorem \ref{m-1-3}.  Firstly, we prove the if part of Theorem \ref{m-1-3}.  By Lemma \ref{lem} and checking the proof of Theorem \ref{m-1-2}, we see that $ \widehat{\mathbf{H}}^{\mu, \nu}_{\gamma, \lambda}$ is bounded on $l^p$, if $d\rho(t)=(1-t)^{\lambda-1}d\lambda(t)$ is a $[1+\frac{1}{p}(\mu-\nu)]$-Carleson measure on $[0, 1)$. The if part of Theorem \ref{m-1-3} is proved.

Secondly, we will show the only if part of Theorem \ref{m-1-3}. In our proof, we need the following well-known estimate, see \cite[Page 54]{Zh}.  Let $0<w<1$. For any $c>0$, we have
\begin{equation}\label{est}\sum_{n=1}^{\infty}n^{c-1}w^{2n}\asymp \frac{1}{(1-w^2)^c}.
\end{equation}
For $0<w<1$. We define $\mathbf{a}=\{\mathbf{a}\}_{n=1}^{\infty}$ as
\begin{equation}\label{re}
\mathbf{a}_n=(1-w^2)^{\frac{1}{p}}w^{\frac{2}{p}(n-1)}, n\in \mathbb{N}.\end{equation}
Then it is easy to  see that $\|\mathbf{a}\|_{p}=1.$  In view of the boundedness of $\widehat{\mathbf{H}}^{\mu, \nu}_{\gamma, \lambda}$, we obtain that
\begin{eqnarray}\label{last}
1 &\succeq& \|\widehat{\mathbf{H}}^{\mu, \nu}_{\gamma, \lambda}\mathbf{a}\|_{p}^p \nonumber \\
&=&\sum_{m=1}^{\infty}m^{\mu}\Bigg|\sum_{n=1}^{m}\mathbf{a}_n n^{-\frac{\nu}{p}}\int_{0}^1t^{m-1}d\rho(t) +\sum_{n=m+1}^{\infty}\mathbf{a}_n n^{-\frac{\nu}{p}}\int_{0}^1t^{n-1}d\rho(t) \Bigg|^p \nonumber \\
&=&(1-w^2)\sum_{m=1}^{\infty}m^{\mu}\Bigg|\sum_{n=1}^{m} w^{\frac{2}{p}(n-1)} n^{-\frac{\nu}{p}}\int_{0}^1t^{m-1}d\rho(t) 
\nonumber \\
&&\quad\quad\quad+\sum_{n=m+1}^{\infty}w^{\frac{2}{p}(n-1)} n^{-\frac{\nu}{p}}\int_{0}^1t^{n-1}d\rho(t) \Bigg|^p.
\nonumber
\end{eqnarray}

(${\bf {I}}$) When $0\leq \nu<p-1$, we see that 
\begin{eqnarray}\label{re-1}
1& \succeq& \|\widehat{\mathbf{H}}^{\mu, \nu}_{\gamma, \lambda}\mathbf{a}\|_{p}^p\geq (1-w^2)\sum_{m=1}^{\infty}m^{\mu}\Bigg|\sum_{n=1}^{m} w^{\frac{2}{p}(n-1)} n^{-\frac{\nu}{p}}\int_{0}^1t^{m-1}d\rho(t) \Bigg|^p \nonumber \\
&\geq & (1-w^2)\sum_{m=1}^{\infty}m^{\mu}\Bigg|\sum_{n=1}^{m} w^{\frac{2}{p}(n-1)} n^{-\frac{\nu}{p}}\int_{w}^1t^{m-1}d\rho(t) \Bigg|^p \nonumber \\
&\geq & (1-w^2)[\rho([w,1))]^p\sum_{m=1}^{\infty}m^{\mu}w^{p(m-1)}\Bigg[\sum_{n=1}^{m} w^{\frac{2}{p}(n-1)} n^{-\frac{\nu}{p}} \Bigg]^p. 
\end{eqnarray}
On the other hand, we note that, for any $m\geq 1$,
\begin{equation*}\label{n-1}\sum_{n=1}^{m}w^{\frac{2}{p}(n-1)}n^{-\frac{\nu}{p}}\geq m\cdot  w^{\frac{2}{p}(m-1)}m^{-\frac{\nu}{p}}=w^{\frac{2}{p}(m-1)}m^{1-\frac{\nu}{p}}.\end{equation*}
Then we get that
\begin{equation}
\sum_{m=1}^{\infty}m^{\mu}w^{p(m-1)}\Bigg[\sum_{n=1}^{m} w^{\frac{2}{p}(n-1)} n^{-\frac{\nu}{p}} \Bigg]^p\geq \sum_{m=1}^{\infty} m^{\mu+p(1-\frac{\nu}{p})}w^{(p+2)(m-1)}.
\nonumber
\end{equation}
It follows from (\ref{re-1}) that
\begin{equation}\label{end}
1\succeq(1-w^2)[\rho([w, 1))]^{p} \sum_{m=1}^{\infty} m^{\mu+p(1-\frac{\nu}{p})}w^{(p+2)(m-1)}. \nonumber
\end{equation}
Then we conclude from (\ref{est}) that
$$(1-w^2)[\rho([w, 1))]^{p}\frac{1}{(1-w^2)^{\mu+p(1-\frac{\nu}{p})+1}}\preceq 1.$$
This implies that
$$\rho([w, 1))\preceq (1-w^2)^{1+\frac{1}{p}(\mu-\nu)},\,\, {\text {for all}}\,\, w\in (0,1).$$

(${\bf {II}}$) When $-1<\nu<0$, we see that 
\begin{eqnarray}\label{re-2}
1& \succeq& \|\widehat{\mathbf{H}}^{\mu, \nu}_{\gamma, \lambda}\mathbf{a}\|_{p}^p\geq (1-w^2)\sum_{m=1}^{\infty}m^{\mu}\Bigg|\sum_{n=m+1}^{\infty} w^{\frac{2}{p}(n-1)} n^{-\frac{\nu}{p}}\int_{0}^1t^{n-1}d\rho(t) \Bigg|^p \nonumber \\
&\geq & (1-w^2)\sum_{m=1}^{\infty}m^{\mu}\Bigg|\sum_{n=m+1}^{\infty} w^{\frac{2}{p}(n-1)} n^{-\frac{\nu}{p}}\int_{w}^1t^{n-1}d\rho(t) \Bigg|^p \nonumber \\
&\geq & (1-w^2)[\rho([w,1))]^p\sum_{m=1}^{\infty}m^{\mu}\Bigg[\sum_{n=m+1}^{\infty} w^{(\frac{2}{p}+1)(n-1)} n^{-\frac{\nu}{p}} \Bigg]^p. 
\end{eqnarray}
Meanwhile, we note that, for any $m\geq 1$,
\begin{eqnarray}
\lefteqn{\sum_{n=m+1}^{\infty} w^{(\frac{2}{p}+1)(n-1)} n^{-\frac{\nu}{p}} \geq\sum_{n=m+1}^{\infty} w^{(\frac{2}{p}+1)(n-1)} m^{-\frac{\nu}{p}} } \nonumber\\
&&=m^{-\frac{\nu}{p}} \frac{w^{(\frac{2}{p}+1)m}}{1-w^{\frac{2}{p}+1}} \succeq m^{-\frac{\nu}{p}} \frac{w^{(\frac{2}{p}+1)m}}{1-w^{2}}.\nonumber 
\end{eqnarray}
Then we get that
\begin{equation}
\sum_{m=1}^{\infty}m^{\mu}\Bigg[\sum_{n=m+1}^{\infty} w^{(\frac{2}{p}+1)(n-1)} n^{-\frac{\nu}{p}} \Bigg]^p \succeq \frac{1}{(1-w^{2})^p}\sum_{m=1}^{\infty}m^{\mu-\nu}w^{(p+2)m}. \nonumber 
\end{equation}
It follows from (\ref{re-2}) that
\begin{equation}
1\succeq\frac{1-w^2}{(1-w^{2})^p}[\rho([w, 1))]^{p} \sum_{m=1}^{\infty} m^{\mu-\nu}w^{(p+2)m}. \nonumber
\end{equation}
Then, from again (\ref{est}), we see that
$$(1-w^2)[\rho([w, 1))]^{p}\frac{1}{(1-w^2)^{\mu-\nu+p+1}}\preceq 1.$$
This also implies that
$$\rho([w, 1))\preceq (1-w^2)^{1+\frac{1}{p}(\mu-\nu)},\,\, {\text {for all}}\,\, w\in (0,1).$$
Combining (${\bf {I}}$) and (${\bf {II}}$), we see that $\rho$ is a $[1+\frac{1}{p}(\mu-\nu)]$-Carleson measure on $[0, 1)$ and the only if part of  \ref{m-1-3} is proved.  
Now, the proof of Theorem \ref{m-1-3} is finished.

We next prove Theorem \ref{m-1-4}.  We first show the if part.  We assume that $\rho$ is a vanishing $[1+\frac{1}{p}(\mu-\nu)]$-Carleson measure on $[0, 1)$. Let $\mathfrak{M}\in \mathbb{N}$, we define the operator $\mathbf{H}^{[\mathfrak{M}]}$ as, for $a=\{a_n\}_{n=1}^{\infty}$, $$\mathbf{H}^{[\mathfrak{M}]}(a)(m):=m^{\frac{\mu}{p}}\sum_{n=1}^{\infty}n^{-\frac{\nu}{p}}\mathcal{I}_{\lambda}[m, n]a_n,$$
when $m\leq \mathfrak{M}$, and $\mathbf{H}^{[\mathfrak{M}]}(a)(m):=0,$ when $m\geq \mathfrak{M}+1$.
Then we see that $\mathbf{H}^{[\mathfrak{M}]}$ is a finite rank operator and hence it is compact on $l^{p}$.
By Lemma \ref{lem}, we know that, for any $\epsilon>0$, there is an ${\mathbf{M}}\in \mathbb{N}$ such that
$$\mathcal{I}_{\lambda}[m, n]\preceq \frac{\epsilon}{[\max\{m, n\}]^{1+\frac{1}{p}(\mu-\nu)}}$$
holds for all $n\geq 1, m>{\mathbf{M}}$.
Then, we see from
\begin{eqnarray}\|(\widehat{\mathbf{H}}^{\mu, \nu}_{\gamma, \lambda}-\mathbf{H}^{[\mathfrak{M}]})a\|_{p}^p=\sum_{m=\mathfrak{M}+1}^{\infty}m^{{\mu}}\left|\sum_{n=1}^{\infty}n^{-\frac{\nu}{p}}\mathcal{I}_{\lambda}[m, n]a_n\right|^p,\nonumber
\end{eqnarray}
that,
\begin{eqnarray}\|(\widehat{\mathbf{H}}^{\mu, \nu}_{\gamma, \lambda}-\mathbf{H}^{[\mathfrak{M}]})a\|_{p}^p\preceq
 {\epsilon}^p \sum_{m=\mathfrak{M}+1}^{\infty}m^{{\mu}}\left|\sum_{n=1}^{\infty}\frac{a_n}{[\max\{m, n\}]^{1+\frac{1}{p}(\mu-\nu)}}\right|^p,\nonumber\end{eqnarray}
when $\mathfrak{M}>{\mathbf{M}}.$
Consequently, by checking the proof of Theorem \ref{m-1-2},  we see that, for any $\epsilon>0$, it holds that
\begin{eqnarray}\|(\widehat{\mathbf{H}}^{\mu, \nu}_{\gamma, \lambda}-\mathbf{H}^{[\mathfrak{M}]})a\|_{p}\preceq
 {\epsilon}  \|a\|_p, \nonumber\end{eqnarray}
for all $a\in l^p$ when $\mathfrak{M}>{\mathbf{M}}$. It follows that $\widehat{\mathbf{H}}^{\mu, \nu}_{\gamma, \lambda}$ is compact on $l^p$. This proves the if part of Theorem \ref{m-1-4} .

Finally, we prove the only if part.  For $0<w<1$.  We take $\mathbf{{a}}=\{\mathbf{{a}}_n\}_{n=1}^{\infty}$ as in (\ref{re}). 
It is easy to check that  $\{\mathbf{{a}}_n\}_{n=1}^{\infty}$ is convergent weakly to $0$ on $l^{p}$ as $w\rightarrow 1^{-}$. Since $\widehat{\mathbf{H}}^{\mu, \nu}_{\gamma, \lambda}$ is compact on $l^p$, we get that
\begin{equation}\label{com-0}\lim_{w\rightarrow {1^{-}}} \|\widehat{\mathbf{H}}^{\mu, \nu}_{\gamma, \lambda}\mathbf{{a}}\|_{p}=0.\end{equation}
On the other hand, by checking the arguments of the proof of Theorem \ref{m-1-3}, we have
\begin{eqnarray}
\|\widehat{\mathbf{H}}^{\mu, \nu}_{\gamma, \lambda}\mathbf{{a}}\|_{p}^p
&\succeq&[\rho([w,1))]^p\cdot \frac{1}{(1-w^2)^{\mu-\nu+p}}.\nonumber
\end{eqnarray}
This yields that
\begin{eqnarray}
\rho([w,1))\preceq \|\widehat{\mathbf{H}}^{\mu, \nu}_{\gamma, \lambda}{\mathbf{{a}}}\|_{p} (1-w^2)^{1+\frac{1}{p}(\mu-\nu)}.\nonumber
\end{eqnarray}
It follows from (\ref{com-0}) that $\rho$ is a vanishing $[1+\frac{1}{p}(\mu-\nu)$-Carleson measure on $[0, 1)$. This proves the only if part of Theorem \ref{m-1-4}  and the proof of Theorem \ref{m-1-4} is completed.

\section{\bf {Final Remarks}}
\begin{remark} We first point out that the assumptions $-1<\mu, \nu<p-1$ in Theorem \ref{m-1-1} and \ref{m-1-2} are both necessary.  We consider the case $\alpha=\beta=\gamma=1, \mu=\nu:=\delta$. That is to say, we will consider the operator
$$\mathbf{H}^{\delta, \delta}_{1, 1, 1}(a)(m)=m^{\frac{\delta}{p}}\sum_{n=1}^{\infty}
\frac{n^{-\frac{\delta}{p}}a_n}{\max\{m, n\}} ,\, a=\{a_n\}_{n=1}^{\infty}, \,  m\geq 1. $$

We will use $\overline{\mathbf{H}}_{\delta}$ to denote $\mathbf{H}^{\delta, \delta}_{1, 1, 1}$. We shall show that

\begin{proposition}$\overline{\mathbf{H}}_{\delta}$ is not bounded on $l^p$, if $\delta\leq -1$, or $\delta\geq p-1$.\end{proposition}
\begin{proof}
For $\varepsilon>0$, we take $\bar{a}=\{\bar{a}_n\}_{n=1}^{\infty}$ with $\bar{a}_n=\varepsilon^{\frac{1}{p}}n^{-\frac{1+\varepsilon}{p}}$.  We see that
$$\|\bar{a}\|_p^p=\varepsilon+ \varepsilon\sum_{n=2}^{\infty}n^{-1-\varepsilon}\leq \varepsilon+1,$$
and
$$\|\overline{\mathbf{H}}_{\delta}\bar{a}\|_p^p=\sum_{m=1}^{\infty}m^{\delta}\left[\sum_{n=1}^{\infty}
\frac{n^{-\frac{\delta+1+\varepsilon}{p}}}{\max\{m, n\}}\right]^p.$$

(${\bf {I}}$)  If $\delta< -1$,  when $\varepsilon<-(\delta+1)$, we see from $\delta+1+\varepsilon<0$ that, for any fixed $m \geq 1$, it holds that
$$\sum_{n=1}^{\infty}
\frac{n^{-\frac{\delta+1+\varepsilon}{p}}}{\max\{m, n\}}\geq \sum_{n=m}^{\infty}
\frac{n^{-\frac{\delta+1+\varepsilon}{p}}}{n}\geq \sum_{n=m}^{\infty}
n^{-1-\frac{\delta+1+\varepsilon}{p}}=+\infty. $$
This means that $\overline{\mathbf{H}}_{\delta}$ is not bounded on $l^p$ in this case.

(${\bf {II}}$)  If $\delta=-1$ or $\delta\geq p-1$, for all $m\geq 1$, we have
\begin{eqnarray}\sum_{n=1}^{\infty}
\lefteqn{\frac{ n^{-\frac{\delta+1+\varepsilon}{p}}}{\max\{m, n\}} \geq \int_{1}^{\infty}
\frac{x^{-\frac{\delta+1+\varepsilon}{p}}}{\max\{m, x\}}\,dx} \nonumber  \\ &=&m^{-\frac{\delta+1+\varepsilon}{p}} \int_{\frac{1}{m}}^{\infty}
\frac{t^{-\frac{\delta+1+\varepsilon}{p}}}{\max\{1, t\}}\,dt  \nonumber 
\\&\geq &m^{-\frac{\delta+1+\varepsilon}{p}}\int_{1}^{\infty}
t^{-1-\frac{\delta+1+\varepsilon}{p}}\,dt=\frac{p}{1+\delta+\varepsilon}m^{-\frac{\delta+1+\varepsilon}{p}}. \nonumber \end{eqnarray}
Consequently,
\begin{eqnarray}\|\overline{\mathbf{H}}_{\delta}\bar{a}\|_p^p\geq \left(\frac{p}{1+\delta+\varepsilon}\right)^p\cdot \sum_{m=1}^{\infty}\frac{1}{m^{1+\varepsilon}}. \nonumber \end{eqnarray}
On the other hand, we have 
\begin{eqnarray}
\sum_{m=1}^{\infty}\frac{1}{m^{1+\varepsilon}}=\frac{1}{\varepsilon}(1+o(1)), \, \varepsilon \rightarrow 0^{+}.\nonumber
\end{eqnarray}
Therefore, we get that
\begin{eqnarray}\|\overline{\mathbf{H}}_{\delta}\bar{a}\|_p^p\geq  \left(\frac{p}{1+\delta+\varepsilon}\right)^p \frac{1}{\varepsilon}(1+o(1)).\nonumber \end{eqnarray}
Taking $\varepsilon \rightarrow 0^{+}$, we obtain that $\|\overline{\mathbf{H}}_{\delta}\bar{a}\|_p^p \rightarrow +\infty$.  This implies that $\overline{\mathbf{H}}_{\delta}$ is not bounded on $l^p$ when $\delta=-1$ or $\delta\geq p-1$.
The proposition is proved.
\end{proof}
\end{remark}

\begin{remark}
When $\gamma=1$, from Theorem \ref{m-1-3} and \ref{m-1-4}, we have
\begin{corollary}
Let $p>1$ and $-1<\mu, \nu<p-1$.  Let $\lambda$ be a positive finite Borel measure on $[0, 1)$ and  $\check{\mathbf{H}}^{\mu, \nu}_{\lambda}$ be defined as 
$$\check{\mathbf{H}}^{\mu, \nu}_{\lambda}(a)(m):=\sum_{n=1}^{\infty} m^{\frac{\mu}{p}}n^{-\frac{\nu}{p}}\check{\mathcal{I}}_\lambda[m,n]a_n,  \, a=\{a_n\}_{n=1}^{\infty}, \,  m\geq 1.$$
Here \begin{equation}\check{\mathcal{I}}_\lambda[m, n]=\int_{[0, 1)}t^{\max\{m, n\}-1}d\lambda(t), \,m, n\geq 1. \nonumber \end{equation}
Then $\check{\mathbf{H}}^{\mu, \nu}_{\lambda}$ is bounded (compact) on $l^p$ if and only if
$\lambda$ is a (vanishing) $[1+\frac{1}{p}(\mu-\nu)]$-Carleson measure on $[0, 1)$, respectively. 
\end{corollary}
\end{remark}

\begin{remark}
We finally consider the Hardy-Littlewood-P\'olya-type operator acting on the analytic function spaces in the unit disk $\mathbb{D}$.  Let $\mathcal{A}(\mathbb{D})$ be the class of all analytic functions in the unit disk $\mathbb{D}$ of the complex plane. These years, for a function $f(z)=\sum_{n=0}^{\infty}a_n z^n \in \mathcal{A}(\mathbb{D})$, the following Hilbert operator $\mathcal{H}$, acting on the Taylor coefficients of $f$, and its variants and generalizations have been extensively studied, see \cite{BK},  \cite{D-1, D, DS, DJV}, \cite{GGPS, GP, GM-2}, \cite{Ka}, \cite{LMW}, \cite{LMN}, \cite{PR}. 
\begin{equation}\label{hhh-1}\mathcal{H}(f)(z):=\sum_{m=0}^{\infty}\Bigg[\sum_{n=0}^{\infty}\frac{a_n}{m+n+1}\Bigg]z^m.  \nonumber
\end{equation}
For $\gamma>0$, $f=\sum_{n=0}^{\infty}a^n z^n\in \mathcal{A}(\mathbb{D})$, we similarly define the Hardy-Littlewood-P\'olya-type operator ${\mathbb{H}}_{\gamma}$ as
\begin{equation*} {\mathbb{H}}_{\gamma}(f)(z):=\sum_{m=0}^{\infty} \left(\sum_{n=0}^{\infty} \frac{a_n}{[\max\{m+1, n+1\}]^{\gamma}}\right)z^m.
\end{equation*}
We will investigate the boundedness of ${\mathbb{H}}_{\gamma}$ acting on certain spaces of analytic functions in $\mathbb{D}$.

Let $q$ be a positive number and  $X_q$ be a Banach space of analytic functions in $\mathbb{D}$. For any $f\in X_q$, we assume that the norm $\|f\|_{X_{q}}$  of $f$ is determined by  $f$, $q$ and other finite parameters $\beta_1, \beta_2, \cdots, \beta_k$.  Here $k$ is a non-negative integer and $k=0$ means that there is no parameter.
%That is to say$$\|f\|_X=\mathbf{F}(f, p, \beta_1, \beta_2, \cdots, \beta_m).$$

We denote by $\mathcal{P}(\mathbb{D})$ the class of all functions $f=\sum_{n=0}^{\infty}a_n z^n\in \mathcal{H}(\mathbb{D})$ with $\{a_n\}_{n=0}^{\infty}$ is a decreasing sequence of non-negative real numbers. We say $X_q$ have the {\em sequence-like property}, if, for a function $f\in \mathcal{P}(\mathbb{D})$,  there is a constant $\mathbb{I}_X=\mathbb{I}_{X}(q, \beta_1, \beta_2, \cdots, \beta_k)$ with $\mathbb{I}_X>-1$ such that  $f \in  X_q$  if and only if
$$\sum_{n=0}^{\infty} (n+1)^{\mathbb{I}_X}a_n^{q}<+\infty.$$

We point out that many classical spaces of analytic functions in $\mathbb{D}$ have the sequence-like property.   Let $f=\sum_{n=0}^{\infty}a_n z^n\in \mathcal{P}(\mathbb{D})$.
For example, 

(\dag) the Hardy space $H^q(\mathbb{D}), 1<q<\infty$, we know that, see \cite[page 127]{Pa},  $f \in H^q(\mathbb{D})$ if and only if $$\sum_{n=0}^{\infty} (n+1)^{q-2}a_n^{q}<+\infty.$$

(\dag\dag) For $1<q<\infty$,  let $-2<\alpha\leq q-1$.  It holds that, see \cite[Lemma 4]{GM-2}, $f \in \mathcal{D}^q_{\alpha}(\mathbb{D})$ if and only if
 $$\sum_{n=0}^{\infty} (n+1)^{2q-3-\alpha}a_n^{q}<+\infty.$$
 Here $\mathcal{D}^q_{\alpha}(\mathbb{D})$ is the Dirichlet-type space, defined as
%\begin{eqnarray}\mathcal{D}^q_{\alpha}(\mathbb{D})=\Bigg\{f\in \mathcal{H}({\mathbb{D}}): \|f\|_{\mathcal{D}^q_{\alpha}}=|f(0)|+\nonumber \\
%&& \Bigg[(\alpha+1)\int_{\mathbb{D}}|f'(z)|^q(1-|z|^2)^{\alpha}dA(z)\Bigg]^{\frac{1}{q}}<+\infty\Bigg\}.\nonumber \end{eqnarray}
 \begin{eqnarray}\lefteqn{\mathcal{D}^q_{\alpha}(\mathbb{D})=\Bigg\{f\in \mathcal{H}({\mathbb{D}}): \|f\|_{\mathcal{D}^q_{\alpha}}=|f(0)|}
\nonumber\\ &&\quad\quad\quad\quad+\Bigg[(\alpha+1)\int_{\mathbb{D}}|f'(z)|^q(1-|z|^2)^{\alpha}dA(z)\Bigg]^{\frac{1}{q}}<+\infty\Bigg\}.\nonumber \end{eqnarray}

 (\dag\dag\dag) For $1<q<\infty$, let $-1<\alpha<q+2$. It holds that, see \cite[Proposition 1]{GM-2},  $f \in \mathcal{A}^q_{\alpha}(\mathbb{D})$ if and only if
 $$\sum_{n=0}^{\infty} (n+1)^{q-3-\alpha}a_n^{q}<+\infty.$$
 Here $\mathcal{A}^q_{\alpha}(\mathbb{D})$ is the Bergman space, defined as
 $$\mathcal{A}^q_{\alpha}(\mathbb{D})=\left\{f\in \mathcal{H}({\mathbb{D}}): \|f\|_{\mathcal{A}^q_{\alpha}}^q=(\alpha+1)\int_{\mathbb{D}}|f(z)|^q(1-|z|^2)^{\alpha}dA(z)<+\infty\right\}.$$

 We obtain that
\begin{proposition}\label{rem}
Let $\gamma$, $q$ be two positive numbers. Let $X_q$ be a Banach space of analytic functions in $\mathbb{D}$ which has the sequence-like property and  ${\mathbb{H}}_{\gamma}$ be as above. Then the necessary condition of ${\mathbb{H}}_{\gamma}: X_q\rightarrow X_q$ is bounded  is $\gamma\geq 1$.
\end{proposition}

\begin{proof}
We will prove that,  ${\mathbb{H}}_{\gamma}: X_q\rightarrow X_q$  can not be bounded, if $0<\gamma<1$. Let $\varepsilon>0$ and set $\widetilde{f}_{\varepsilon}=\sum_{n=0}^{\infty}\widetilde{a}_n z^n$ with
$\widetilde{a}_n=(\frac{\varepsilon}{1+\varepsilon})^{\frac{1}{q}}(n+1)^{-\frac{\mathbb{I}_X+1+\varepsilon}{q}}.$ It is easy to see that $\{\widetilde{a}_n\}_{n=0}^{\infty}$ is a decreasing sequence and $\sum_{n=0}^{\infty}(n+1)^{\mathbb{I}_{X}}\widetilde{a}_n^{q}<\infty.$ Hence $\widetilde{f}_\varepsilon \in X_q$. We set
$$b_m=\sum_{n=0}^{\infty}\frac{\widetilde{a}_n}{[\max\{m+1, n+1\}]^{\gamma}}, \, m\geq 0.$$
We suppose that ${\mathbb{H}}_{\gamma}: X_q\rightarrow X_q$  is bounded. Then, by the fact that $\{b_n\}_{n=0}^{\infty}$ is a decreasing sequence, we see that $g(z)=\sum_{n=0}^{\infty}b_n z^n \in X_q$ and hence
$$\sum_{m=0}^{\infty} (m+1)^{\mathbb{I}_X} b_n^q <+\infty.$$
That is 
\begin{eqnarray}\label{e-1}
+ \infty &>& \sum_{m=0}^{\infty} (m+1)^{\mathbb{I}_X} \left[\sum_{n=0}^{\infty}\frac{\widetilde{a}_n}{[\max\{m+1, n+1\}]^{\gamma}}\right]^q  \\ &=& 
 \frac{\varepsilon}{1+\varepsilon} \sum_{m=0}^{\infty} (m+1)^{\mathbb{I}_X} \left[\sum_{n=0}^{\infty}\frac{(n+1)^{-\frac{\mathbb{I}_X+1+\varepsilon}{q}}}{[\max\{m+1, n+1\}]^{\gamma}}\right]^q.\nonumber
\end{eqnarray}
On the other hand, for any $m\geq 0$, we have 
\begin{eqnarray}\label{e-2}
\lefteqn{\sum_{n=0}^{\infty}\frac{(n+1)^{-\frac{\mathbb{I}_X+1+\varepsilon}{q}}}{[\max\{m+1, n+1\}]^{\gamma}}=\sum_{n=1}^{\infty}\frac{n^{-\frac{\mathbb{I}_X+1+\varepsilon}{q}}}{[\max\{m+1, n\}]^{\gamma}}}\\
&\geq& \int_{1}^{\infty} \frac{x^{-\frac{\mathbb{I}_X+1+\varepsilon}{q}}}{[\max\{m+1,x\}]^{\gamma}}\,dx \nonumber \\ & =&
(m+1)^{(1-\lambda)-\frac{\mathbb{I}_X+1+\varepsilon}{q}} \cdot \int_{\frac{1}{m+1}}^{\infty} \frac{y^{-\frac{\mathbb{I}_X+1+\varepsilon}{q} }}{[\max\{1, y\}]^{\gamma}}\,dy \nonumber \\
&\geq& (m+1)^{(1-\gamma)-\frac{\mathbb{I}_X+1+\varepsilon}{q}}\int_{1}^{\infty} y^{-\gamma-\frac{\mathbb{I}_X+1+\varepsilon}{q} }\,dy \nonumber \\
&:=& (m+1)^{(1-\gamma)-\frac{\mathbb{I}_X+1+\varepsilon}{q}}E(\varepsilon).\nonumber 
\end{eqnarray}
Combining (\ref{e-1}) and (\ref{e-2}), we get that
\begin{eqnarray}\label{e-3}
+\infty &>& \frac{\varepsilon}{1+\varepsilon} [E(\varepsilon)]^q \left[\sum_{m=0}^{\infty}(m+1)^{q(1-\gamma)-1-\varepsilon}\right].\end{eqnarray}
But, if $\gamma< 1$,  when $\varepsilon<q(1-\gamma)$, we have $$\sum_{m=0}^{\infty}(m+1)^{q(1-\gamma)-1-\varepsilon}=+\infty.$$Thus (\ref{e-3}) is a contradiction since $E(\varepsilon)>0$. This means that ${\mathbb{H}}_{\gamma}: X_q\rightarrow X_q$  can not be bounded when $\gamma< 1$.  The proposition is proved.
\end{proof}
\end{remark}

For $\gamma>0$, let $\lambda$ be a positive Borel measure in $[0,1)$, we define the operator 
\begin{equation}\label{hhh-2}\mathbb{H}_{\gamma, \lambda}(f)(z):=\sum_{m=0}^{\infty}\Bigg[\sum_{n=0}^{\infty}\mathbf{I}_{\gamma, \lambda}[m, n]a_n\Bigg]z^m, \,f=\sum_{n=0}^{\infty}a_n z^n \in \mathcal{A}(\mathbb{D}). \nonumber
\end{equation}
Here \begin{equation}\label{las}\mathbf{I}_{\gamma, \lambda}[m, n]=\int_{[0, 1)}t^{\max\{m, n\}}(1-t)^{\gamma-1}d\lambda(t), \,m, n\geq 0. \end{equation}
When $\lambda$ in (\ref{las}) is the Lesbegue measure in $[0, 1)$, we see that 
$$\mathbf{I}_{\gamma, \lambda}[m, n] \asymp \frac{1}{[\max\{m+1, n+1\}]^{\gamma}}, m, n\geq 0.$$

It is interesting to study
\begin{question}
Characterize the measures $\lambda$ such that $\mathbb{H}_{\gamma, \lambda}$ is bounded (compact) from one analytic function space $X$ to another one $Y$. 
\end{question}

% ------------------------------------------------------------------------
\end{document}